\theoremstyle{definition}
\newtheorem{lemma}{Lemma}[section]
\newtheorem{prop}[lemma]{Proposition}
\newtheorem{thm}[lemma]{Theorem}
\newtheorem{remark}[lemma]{Remark}
\newtheorem{example}[lemma]{Example}
\newtheorem{defin}[lemma]{Definition}
\renewenvironment{proof}[1][]{\par \textbf{Proof #1.}}{\par\medskip\hfill\hbox{$\quad\Box$}\par\medskip\ignorespacesafterend}
\newenvironment{subtitle}{\itshape}{\par\medskip}
\newcommand{\R}{\mathbb{R}}
\newcommand{\N}{\mathbb{N}}
\newcommand{\Q}{\mathbb{Q}}
\newcommand{\Z}{\mathbb{Z}}
\newcommand{\F}{\mathbb{F}}
\begin{document}

\begin{center}
{\def\baselinestretch{1.5}
\large \bf Hybrid Continued Fractions and $n$-adic algorithms,\\
\vspace{2mm}
\normalsize{with applications to cryptography and ``unimaginable'' numbers}}
\vspace{6mm}

{\normalsize\bf Antonino Leonardis$^1$}
\vspace{10mm}

{\scriptsize
$^1$ Department of Mathematics and Computer Science, University of Calabria \\ 
Arcavacata di Rende, Italy \\
e-mail: {antonino.leonardis@unical.it}
\vspace{4mm}
}
\end{center}
\vspace{10mm}

\section*{Abstract}
This paper continues the author's previous studies on continued fractions and Heron's algorithm, as from his former JMM2017 presentation (see \cite{CF.HA}).\par\medskip
Extending the notion of continued fraction to the $p$-adic fields, one can find continued fractions which converge in both real and $p$-adic topologies to the ``same'' quadratic irrational number, some of which are given by the Heron's algorithm using a generalized version of an author's theorem from the cited JMM presentation. The definition can be possibly generalized to other global fields, as left as an open question. We will end the part on hybrid convergence with many numerical examples. After that, we will recall the basic algorithms on the $p$-adic fields studied by the author and see some applications of theirs to computer science: applying Heron's algorithm to quickly compute $p$-adic square roots, finding new elementary cryptography procedures and some methods to get pseudo-random numbers, calculate last digits of some peculiar very big numbers.

\section{Introduction}
Heron's algorithm is very important in history of mathematics, and is a particular case of Newton's method for approximating zeroes of a function. We will recall in this work a theorem on Heron's algorithm related to continued fractions proved in \cite{CF.HA}. The theorem will also hold for a generalization of continued fractions (definition \ref{def:generalized.CF}) which converges also in local $p$-adic completions (for every prime divisor $p$ of a fixed natural squarefree number $n$), in the wake of the author's previous work (see PhD thesis \cite{CF.NA}). After recalling Lagrange's theorem for quadratic irrationals, we will see that some of them have a common representation both as real and $p$-adic ($p$ as before) numbers. We must recall that Heron's algorithm is also very important in $p$-adic fields, where Newton's method is better known as the fundamental \emph{Hensel's lemma} and gives a very fast algorithm for computing the $p$-adic square root (compare \cite{SML}). Other generalizations of continued fractions may be studied for such hybrid convergences, and this is left as an open problem. In the end of the section we will leave an open question over possible generalizations of the definition to other fields and then give some numerical examples of the hybrid convergence in the generalized definition given here. In the last section we will see applications of general $n$-adic algorithms, such as pseudo-randomizing methods (see \cite{MM.GF}, \cite{RND}) and fast computation of the last decimal digits for some ``unimaginable numbers'' (see \cite{UN}).\par\medskip
This work has been presented both at ``The First Symposium of the International Pythagorean School -- da Pitagora a Sch\"utzenberger: numeri inimmaginabil\^i\^i\^i'' held in Cosenza, Italy (september 2018) and at the JMM 2019 held in Baltimore, MD.

\subsection{$p$-adic fields and $n$-adic rings}
\begin{subtitle}See \cite{pAdic} and \cite{CASS} for details on the theory of $p$-adic numbers.\end{subtitle}
Given a prime number $p$, the set of $p$-adic integers can be represented as a power series in the letter $p$ with exponents growing towards $+\infty$ and digits in a finite set of $p$ elements, generally $0,1,\ldots,p-1$. More generally, one can allow $p=n$ to be any given integer $n>1$.\par\medskip
For instance, the number $-1\in\Z$ can be represented in the $10$-adic ring as:
$$\ldots999999=\sum_{i=0}^\infty{9\cdot 10^i}$$
because indeed one expects that (as we will see in the ``$n$-adic algorithms'' section):
\begin{align*}
	\ldots999999&+\\
	1&=\\
	\ldots000000&
\end{align*}
In the prime case $n=p$ one obtains a field $\Q_p$, which is the completion of rational numbers under the so-called $p$-adic absolute value. The latter is defined for a fraction $\pm p^t\frac ab$ (for uniquely determined $t\in\Z$, $a,b\in\Z_{>0}$ not divisible by $p$) as:
$$\left|\pm p^t\frac ab\right|_p=p^{-t}$$
so that in fact positive powers of $p$ converge to $0$.\par\medskip
In the general case $\Z_n$ is a ring with the following properties:
\begin{itemize}
	\item $\Z_{p^k}\overset{\cong}{\longrightarrow}\Z_p$ for any $k>1$ by writing all digits of the first ring in base $p$.
	\item $\Z_{mn}\overset{\cong}{\longrightarrow}\Z_m\times\Z_n$ when $m$ and $n$ are coprime as approximants converge in both rings, giving this way the two projections, and the map is invertible by chinese remainder theorem.
	\item Thus $\Z_n\cong\prod_{i=1}^k \Z_{p_i}$ considering all prime divisors $p_i$ of $n$. As a side fact, when $k>1$ the ring has always zero-divisors.
\end{itemize}
Structure of the multiplicative group $\Z_p^\times$ is discussed in \cite{CF.NA}.
\begin{example}
	Hexadecimal representations of $24$-bits $RGB$ colors is just a shortened version of the actual bit representation:
	\begin{align*}
		\text{\ttfamily FF·FF·00}_{16}&=1111'1111'1111'1111'0000'0000_2\text{ (yellow)}\\
		\text{\ttfamily 00·FF·7F}_{16}&=0000'0000'1111'1111'0111'1111_2\text{ (water blue)}\\
		\text{\ttfamily 7F·3F·00}_{16}&=0111'1111'0011'1111'0000'0000_2\text{ (brown)}
	\end{align*}
\end{example}
\begin{example}
	Let's see an example of a decimal zero-divisor (as explained in the ``last digits of unimaginable numbers'' section):
	\begin{align*}
	\ldots 896109004106619977392256259918212890625.&\times\\
	\ldots 896109004106619977392256259918212890624.&=\\
	\ldots 000000000000000000000000000000000000000.&
	\end{align*}
\end{example}
\subsection{Heron's algorithm and Newton's method}
The \textbf{Newton's method} for finding the roots of a polynomial consists in starting with an approximation $x_i$ of such a root, drawing the tangent at the point $(x_i,y_i)$ to the graph and intersecting this tangent with the $x$-axis, obtaining $x_{i+1}$; the sequence obtained by this method converges to a root of the polynomial.\par\medskip
When one considers the polynomial $x^2-a$ and starts with $x_0=\lfloor\sqrt{a}\rfloor$, Newton's method just sends $x_i$ to the arithmetic mean between $x_i$ and $\frac{a}{x_i}$ and is known as the \textbf{Heron's algorithm} for finding (the geometric mean) $\sqrt{a}$. The intervals between $x_i$ and $\frac{a}{x_i}$ give a so-called \textbf{sequence of chinese boxes}.\par\medskip
We recall the main theorem from \cite{CF.HA}:
\begin{thm}[AL 2017]
\label{thm.AL2017}
Suppose $\xi$ has (classical) continued fraction expansion $[n_0=\left\lfloor\sqrt{x}\right\rfloor,n_1,n_2,\ldots]$ with period of length $1$ or $2$. Apply Heron's algorithm to $a_0:=n_0$ obtaining a sequence $\{a_0,a_1,\ldots\}$. Then $a_i$ is the $2^i$-th approximant via the continued fraction. Vice versa, if the continued fraction has period length greater than $2$, applying in the same way Heron's algorithm one does not get the same sequence of approximants.
\end{thm}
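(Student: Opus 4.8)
The plan is to recast the problem inside the quadratic ring $\Z[\sqrt{x}]$, where Heron's algorithm becomes ordinary squaring, and then to read off the statement from the shape of the periodic expansion. Throughout I take $\xi=\sqrt{x}$ with $x$ a non-square positive integer, so that $n_0=\lfloor\sqrt{x}\rfloor$ is automatic and the expansion has the classical form $\sqrt{x}=[n_0;\overline{n_1,\ldots,n_{\ell-1},2n_0}]$ with period length $\ell\ge1$ (for $\ell=1$ this reads $\sqrt{x}=[n_0;\overline{2n_0}]$).

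First I would record the identity behind the whole theorem: to a value $p/q$ attach $\alpha=p+q\sqrt{x}\in\Q(\sqrt{x})$, which determines the value up to a rational scalar. Since $(p+q\sqrt{x})^2=(p^2+xq^2)+(2pq)\sqrt{x}$ while the Heron iterate of $p/q$ equals $\frac{p^2+xq^2}{2pq}$, the Heron step on values is induced by $\alpha\mapsto\alpha^2$; hence $a_i$ is the value of $\alpha_0^{\,2^i}$, where $\alpha_0=n_0+\sqrt{x}$ corresponds to the first convergent $\frac{p_0}{q_0}=n_0$.

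For the direct implication I would then invoke the classical description of the convergents of $\sqrt{x}$: for period length $\ell$ one has $p_{k\ell-1}+q_{k\ell-1}\sqrt{x}=\varepsilon^{\,k}$ for all $k\ge1$, where $\varepsilon:=p_{\ell-1}+q_{\ell-1}\sqrt{x}$ is a unit of $\Z[\sqrt{x}]$ of norm $\pm1$; this may be cited or reproved from the periodicity of the complete quotients. Setting $\beta_k:=p_{k\ell-1}+q_{k\ell-1}\sqrt{x}$ one gets the doubling relation $\beta_{2k}=\beta_k^{\,2}$. The base case is a one-line check: $a_1=\frac{n_0^2+x}{2n_0}$ equals the second convergent $\frac{p_1}{q_1}=\frac{n_1n_0+1}{n_1}$ precisely when $x=n_0^2+\frac{2n_0}{n_1}$, and solving the corresponding M\"obius fixed-point equation shows this identity characterises exactly the expansions $\sqrt{x}=[n_0;\overline{n_1,2n_0}]$, i.e. $\ell\le2$ (for $\ell=1$ it becomes $n_1=2n_0$, $x=n_0^2+1$). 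Feeding ``$a_1$ is the second convergent'' into ``Heron $=$ squaring on values'' together with $\beta_{2k}=\beta_k^{\,2}$, an immediate induction gives $a_i=\frac{p_{2^i-1}}{q_{2^i-1}}$, i.e. the $2^i$-th approximant, for every $i$.

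For the converse, with $\ell\ge3$, it is enough to observe that the two sequences already differ at $i=1$: by the computation above, $a_1=\frac{n_0^2+x}{2n_0}$ can equal the second approximant $\frac{p_1}{q_1}$ only if $x=n_0^2+\frac{2n_0}{n_1}$, but that forces $\sqrt{x}=[n_0;\overline{n_1,2n_0}]$ and hence $\ell\le2$, contradicting $\ell\ge3$; thus $a_1$ is not the second approximant and the Heron sequence is not the sequence of $2^i$-th approximants. The main obstacle is the arithmetic input in the direct implication --- the identification of the period-spaced convergents $p_{k\ell-1}/q_{k\ell-1}$ with powers of the fundamental unit $\varepsilon$; granting it, everything else is an explicit computation. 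Two minor points need care: a value determines $\alpha$ only up to $\Q^\times$, so one really works in $\Q(\sqrt{x})^\times/\Q^\times$; and the Heron iterate $\frac{p^2+xq^2}{2pq}$ need not be reduced, which is harmless since only its value enters. Finally, ``one does not get the same sequence of approximants'' is read as the failure of $a_i=(2^i\text{-th approximant})$, which the $i=1$ disagreement already establishes; the sharper assertion that $a_1$ is then not a convergent at all could be extracted from the interlacing of convergents about $\sqrt{x}$, but is not needed.
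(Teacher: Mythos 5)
The paper does not actually prove Theorem \ref{thm.AL2017} --- it only recalls it from \cite{CF.HA} --- so your argument has to be judged on its own merits, and on those it stands: it is a complete and correct proof. Your two pillars are (i) the observation that the Heron step on values is induced by squaring in $\Q(\sqrt{x})^\times/\Q^\times$ via $p/q\leftrightarrow p+q\sqrt{x}$, and (ii) the classical Pell-theoretic identity $p_{k\ell-1}+q_{k\ell-1}\sqrt{x}=(p_{\ell-1}+q_{\ell-1}\sqrt{x})^{k}$ for the period-spaced convergents; combined with the base-case computation showing that $a_1=p_1/q_1$ holds if and only if $x=n_0^2+2n_0/n_1$, which in turn holds if and only if $\sqrt{x}=[n_0;\overline{n_1,2n_0}]$, these give both directions at once, the converse falling out of the same equivalence already at $i=1$. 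The only trace of the original argument visible in the present paper is the remark inside the proposition following Theorem \ref{thm:hybridCF} (``the proof is the same without supposing $\lambda$ being an integer''), which suggests that \cite{CF.HA} parametrizes $x=a^2+\lambda$ with $\lambda\mid 2a$ and $b=2a/\lambda$ and runs a direct induction on the convergent recurrence; your route through the fundamental unit reaches the same doubling relation more conceptually, at the price of importing a nontrivial classical theorem that the source proof apparently avoids. The two caveats you flag are handled correctly: the value map is only defined up to $\Q^\times$, which is harmless because squaring commutes with rational scalars, and the non-reducedness of $\frac{p^2+xq^2}{2pq}$ does not matter since only values are compared. Your reading of ``does not get the same sequence of approximants'' as failure of $a_i$ to be the $2^i$-th approximant (witnessed at $i=1$) is the natural one and matches the intended statement.
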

\section{Continued fractions which converge in different absolute values}
\begin{subtitle}
It is possible to define continued fractions in such a way that they converge in $\Q_p$, but the definition is far from being unique and often depends on the results one wants to achieve (apart from author's work, see also \cite{CF.local.fields.1} and \cite{CF.local.fields.2}). We will now consider a definition which ensures convergence for more than just one absolute value. Heron's algorithm in $p$-adic fields will be a particular case of Hensel's lemma.
\end{subtitle}
\begin{defin}
\label{def:generalized.CF}
We will consider a non-zero integer $n\in\Z$, which is specifically the product of $k$ different primes ($n=\pm p_1^{\alpha_1}p_2^{\alpha_2}\cdots p_k^{\alpha_k}$). Then we may consider expressions of the form:
$$x=[a_0;a_1,a_2,\ldots]_n=a_0+\frac{n}{a_1+\frac{n}{a_2+\frac{n}{\vdots}}}$$
where we consider integers $a_i\in\Z$ such that $a_i\geq |n|$ and $(a_i,n)=1$ (inequality is obviously strict when $|n|>1$ from the coprimality condition). We may accept $a_0$ not to be restricted to values greater than $|n|$ as one does with usual continued fractions. We remark that $1$-continued fractions are the classical ones, while $-1$-continued fractions are the almost-classical ones obtained using the ``ceiling'' integral part.
\end{defin}
\begin{thm}
\label{thm:hybridCF}
The $n$-continued fraction defined above converges with respect to the $p_i$-adic absolute value for any prime $p_i$ dividing $n$ and also with respect to the usual absolute value in the real numbers.
\end{thm}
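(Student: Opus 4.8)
The plan is to prove convergence in each of the relevant absolute values (the real one and the $p_i$-adic ones for $p_i \mid n$) by exhibiting the usual recursion for the numerators and denominators of the convergents and then estimating the difference of consecutive convergents in the appropriate metric. Write $p_k/q_k$ for the $k$-th convergent $[a_0; a_1, \ldots, a_k]_n$. The standard manipulation of the $n$-numerator continued fraction gives the recurrences $p_k = a_k p_{k-1} + n\, p_{k-2}$ and $q_k = a_k q_{k-1} + n\, q_{k-2}$, with the usual initial values, and the determinant identity $p_k q_{k-1} - p_{k-1} q_k = (-n)^k (p_0 q_{-1} - p_{-1} q_0)$, hence $p_k q_{k-1} - p_{k-1} q_k = \pm (-n)^{k}$ up to the sign coming from the initial data. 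Consequently the difference of consecutive convergents is
\[
\frac{p_k}{q_k} - \frac{p_{k-1}}{q_{k-1}} = \frac{\pm(-n)^{k}}{q_k q_{k-1}}.
\]
Everything then reduces to controlling $q_k q_{k-1}$ from below in the real absolute value and to controlling the $p_i$-adic size of $(-n)^k / (q_k q_{k-1})$ from above.

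For the real convergence, the point is that $a_k \geq |n|$ forces $|q_k|$ to grow at least geometrically: from $q_k = a_k q_{k-1} + n q_{k-2}$ one shows by induction that $|q_k| \geq c\, \lambda^k$ for a suitable $\lambda > 1$ (e.g. comparing with the dominant root of $t^2 = |n| t + |n|$, which exceeds $|n| \geq 1$), so that $|(-n)^k / (q_k q_{k-1})| = |n|^k / (|q_k||q_{k-1}|)$ is dominated by a convergent geometric series; hence the partial sums $p_k/q_k = a_0 + \sum_{j=1}^k (p_j/q_j - p_{j-1}/q_{j-1})$ form a Cauchy sequence in $\R$. For the $p_i$-adic convergence, I would instead observe that $(a_k, n) = 1$ makes every $q_k$ a $p_i$-adic unit: reducing the recurrence modulo $p_i$ gives $q_k \equiv a_k q_{k-1} \pmod{p_i}$, so $q_k$ is a unit whenever $q_{k-1}$ is, and one checks the base case. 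Therefore $|q_k q_{k-1}|_{p_i} = 1$ and $|(-n)^k/(q_k q_{k-1})|_{p_i} = |n|_{p_i}^k = p_i^{-\alpha_i k} \to 0$, so the same telescoping sum is Cauchy in $\Q_{p_i}$. Since $\Q_{p_i}$ and $\R$ are complete, the sequence of convergents converges in each of these absolute values, which is exactly the assertion.

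**The main obstacle** I anticipate is the bookkeeping around the initial terms and signs: because $a_0$ is allowed to be an arbitrary integer (not bounded below by $|n|$) and because $n$ may be negative, I must be careful that the determinant identity is stated with the correct sign and that the geometric lower bound for $|q_k|$ is not spoiled by small or vanishing early denominators — a clean way is to note that the tail $[a_1; a_2, \ldots]_n$ has all partial quotients $\geq |n|$, prove the estimates for that tail, and then reattach $a_0$ at the end, which only shifts the sequence by a constant and does not affect the Cauchy property. A secondary point worth a sentence is that one should confirm $q_k \neq 0$ (in $\R$) and that the convergents are genuinely well-defined; this again follows from the unit/growth estimates. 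No step requires more than the standard continued-fraction algebra adapted to the extra factor $n$ in the numerators, so beyond this sign-and-initialization care the argument is routine.
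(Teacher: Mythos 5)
Your argument is correct in substance, but it follows a genuinely different route from the paper's. The paper splits the two convergences and treats them by different devices: for the real absolute value it rewrites $[a_0;a_1,a_2,\ldots]_n$ as an ordinary continued fraction $b_0+\frac{1}{b_1+\frac{1}{b_2+\cdots}}$ (dividing numerators and denominators by $n$ so that the classical determinant identity $p_{k-1}q_k-p_kq_{k-1}=\pm1$ and the growth bound $q_{k+1}\geq q_k+q_{k-1}$ apply verbatim), while for the $p_i$-adic convergence it simply invokes the author's earlier local-field machinery, noting that the primes dividing $n$ other than $p_i$ are units in $\Z_{p_i}$. You instead keep the factor $n$ in the numerators throughout, derive the weighted recurrences $p_k=a_kp_{k-1}+np_{k-2}$, $q_k=a_kq_{k-1}+nq_{k-2}$ and the identity $p_kq_{k-1}-p_{k-1}q_k=\pm(-n)^k$, and then read off both convergences from the single formula $\frac{p_k}{q_k}-\frac{p_{k-1}}{q_{k-1}}=\frac{\pm(-n)^k}{q_kq_{k-1}}$ by estimating it in each metric; in particular your observation that $q_k\equiv a_kq_{k-1}\pmod{p_i}$ makes every $q_k$ a $p_i$-adic unit, so that $\bigl|(-n)^k/(q_kq_{k-1})\bigr|_{p_i}=p_i^{-\alpha_ik}\to0$, gives a self-contained $p$-adic proof where the paper only cites a reference. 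What your approach buys is uniformity and explicitness (one identity serving all places, and a concrete reason the convergents are well defined and nonzero $p_i$-adically); what the paper's reduction buys is that the real case becomes literally the classical theorem with no new growth estimate to prove. Do note that your real-case lower bound $|q_k|\geq c\lambda^k$ is immediate only for $n>0$; for $n<0$ one needs the strict inequality $a_k\geq|n|+1$ (which the coprimality condition supplies for $|n|>1$) to push through an induction such as $q_k\geq|n|\,q_{k-1}$, and the case $n=-1$ with $a_i=1$ genuinely fails — but the paper's own sketch (which asserts $b_{k+1}\geq1$ after the rewriting) has exactly the same blind spot, so this is a defect of the definition's generality rather than of your proof.
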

\begin{proof}[\ref{thm:hybridCF}]
With the usual methods for continued fractions in local fields (see \cite{CF.NA}) one can prove that it converges in all $p_i$-adic fields (indeed, fixed a prime, the other primes dividing $n$ are invertible in $\Z_p$ so can be moved to the denominator of the fractions), or equivalently in the $n$-adic ring $\Q_n$. Moreover, we have:
$$[a_0;a_1,a_2,\ldots]_n=a_0+\frac{1}{a_1/n+\frac{1}{a_2+\frac{1}{\vdots}}}=b_0+\frac{1}{b_1+\frac{1}{b_2+\frac{1}{\vdots}}}$$
and this gives the convergence in the set of real numbers in a way analogous to the standard continued fractions.\par\medskip
As a brief recall, we consider the recurrence step for the approximants $\frac{p_i}{q_i}=\left[b_0;b_1,b_2,\ldots,b_i\right]$:
$$\left(\begin{array}{cc}p_k&p_{k-1}\\q_k&q_{k-1}\end{array}\right)\left(\begin{array}{cc}b_{k+1}&1\\1&0\end{array}\right)=\left(\begin{array}{cc}b_{k+1}p_k+p_{k-1}&p_k\\b_{k+1}q_k+q_{k-1}&q_k\end{array}\right)$$
from which we get the following facts:
\begin{itemize}
	\item the denominator recurrence $q_{k+1}=b_{k+1}q_k+q_{k-1}\geq q_k+q_{k-1}$ is necessarily exponentially divergent ($b_{k+1}\geq1$);
	\item the difference of two consecutive approximants $\frac{p_{k+1}}{q_{k+1}}-\frac{p_k}{q_k}$ is as well exponentially convergent to zero because simple calculations give:
	$$\frac{(b_{k+1}p_k+p_{k-1})q_k-p_k(b_{k+1}q_k+q_{k-1})}{q_{k+1}q_k}=\frac{p_{k-1}q_k-p_kq_{k-1}}{q_{k+1}q_k}=\frac{\pm1}{q_{k+1}q_k};$$
	\item thus, the sequence of approximants satisfies Cauchy's condition and converges to a uniquely determined real number.
\end{itemize}
The following results are straightforward:
\begin{prop}
Theorem \ref{thm.AL2017} about Heron's algorithm holds also for $p$-adic continued fraction expansions. Specifically, the proof is the same without supposing $\lambda$ being an integer and considering the $b_i$ expansion. More precisely, given a non-square positive integer $x$, the fact that $\sqrt{x}$ has a hybrid $n$-continued fraction expansion of the form $[a;\overline{b,2a}]_n$ is necessary and sufficient condition for each $a_i$ to be the $2^i$-th approximant via the continued fraction.
\end{prop}
\begin{prop}
Periodic hybrid continued fractions converge to the ``same'' algebraic (quadratic) number for each absolute value which is considered, meaning that it is the solution of the same $2^\text{nd}$ degree equation over $\Q$.
\end{prop}
\end{proof}
We conclude this section with some numerical examples after the following:
\begin{remark}
Continued fractions may be considered also in other global fields and their completions (for instance with Gauss integers $\Z+i\Z$ and complex numbers), with the due attentions. An interesting open question may be whether there is a similar hybrid convergence theory in such cases.
\end{remark}
\subsection{Examples}
\label{sect:examples}
\begin{enumerate}
	\item The first example is $[6,6,\ldots]=[\overline{6}]_5$; a simple calculation shows that $[\overline{6}]_5=3+\sqrt{14}$ meaning that subtracting $3$ we obtain a square root of $14$. It converges in both the usual real absolute value and in the $5$-adic one. Also, $3$ is an integral part for $\sqrt{14}$ both in the real field and in $\Z_5$ as $14\equiv 3^2$ modulo $5$. Thus the Heron's algorithm gives the continued fraction approximants (and consequently converges) in both cases:
	\begin{align*}
	a_0&=3&=[3]_5;\\
	a_1&=\frac{3}{2}+\frac{7}{3}=\frac{23}{6}&=[3,6]_5;\\
	a_2&=\frac{23}{6\cdot2}+\frac{7\cdot6}{23}=\frac{1033}{276}&=[3,6,6,6]_5.
	\end{align*}
	\item We give another perfectly similar example. We take $4+\sqrt{21}=[\overline{8}]_5$, whose radical part has Heron's approximants:
	\begin{align*}
	a_0&=4&=[4]_5;\\
	a_1&=\frac{4}{2}+\frac{21}{8}=\frac{37}{8}&=[4,8]_5;\\
	a_2&=\frac{37}{8\cdot2}+\frac{21\cdot8}{37\cdot 2}=\frac{2713}{592}&=[4,8,8,8]_5.
	\end{align*}
	\item In the following example we consider a simple composite $n$, i.e. $n=10$. Here we have $\frac{11+\sqrt{161}}2=[\overline{11}]_{10}$ with first approximants $11,\frac{131}{11},\frac{1551}{131},\ldots$ in $\R,\Q_2,\Q_5$. Here it is not possible to have a square root without denominator so we can't relate to the Heron's algorithm of the square root of an integer.
	\item In the case $n=15$ we may have instead an integral expression such as $[\overline{16}]_{15}=8+\sqrt{79}$ which converges in $\R,\Q_3,\Q_5$. This expression relates to Heron's algorithm for $\sqrt{79}$:
	\begin{align*}
	a_0&=8&=[8]_{15};\\
	a_1&=\frac{143}{16}&=[8,16]_{15};\\
	a_2&=\frac{40673}{4576}&=[8,16,16,16]_{15}.
	\end{align*}
	\item We conclude this part with a length-2 period case. We take $4+\sqrt{22}=[\overline{8,4}]_3$, whose radical part has Heron's approximants:
	\begin{align*}
	a_0&=4&=[4]_3;\\
	a_1&=\frac{4}{2}+\frac{22}{8}=\frac{19}{4}&=[4,4]_3;\\
	a_2&=\frac{19}{4\cdot2}+\frac{22\cdot4}{19\cdot 2}=\frac{713}{152}&=[4,4,8,4]_5.
	\end{align*}
\end{enumerate}

\section{$n$-adic algorithms and applications}
\begin{subtitle}Calculations with $n$-adic numbers can be used for cryptography and for generating random numbers, as we will see in this section.\end{subtitle}
\subsection{Basic results}

\begin{remark}
In the case of $p$-adic fields, and more generally $n$-adic rings (where $n=p$ is not necessarily a prime number) the equivalent of Newton's approximation method of tangents is known as one of the most important theorems under the name of Hensel's lemma. Thus the convergence of Heron's algorithm can be studied using well known results about Hensel's lemma, and one finds out that a precision of $p^{-k}$ is always achieved within $\log_2 k$ steps. Thus it has applications to $p$-adic computations, where it is indeed a very fast algorithm for computing the square root of a number (compare corollary 4.2.3 from \cite{SML}).
\end{remark}

We recall that $p$-adic numbers, and more in general $n$-adic numbers for any given integer $n>1$, can be represented as a power series in the letter $p$ (or $n$) with exponents growing towards $+\infty$ and digits in a finite set of $p$ (or $n$) elements, generally $0,1,\ldots,p-1$ (the same as real numbers, but with possibly infinite digits only on the left of the decimal point, reflecting the different convergence). In this setup, we can implement algorithm for calculation with the following proposition (see \cite{SML}):

\begin{prop}
The fundamental operations (addition, opposite, multiplication, reciprocal) can be turned into an easy algorithm for calculations into the set $\Z_p^\times$ ``$p$-adically approximated to the $k$-th digit'' by the projection (or \emph{truncation}) into $(\Z/p^k\Z)^\times$, i.e. modulo any element with $p$-adic absolute value less than $p^{-k}$. A similar result holds for $n$-adic rings, given any integer $n>1$. The algorithms can be (and have been by the author) implemented on a computer.
\end{prop}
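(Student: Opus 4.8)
The plan is to deduce the whole statement from the single fact that truncation is a ring homomorphism. First I would recall that $\Z_p$ is by construction the projective limit $\varprojlim_k\Z/p^k\Z$ taken along the reductions $\Z/p^{k+1}\Z\twoheadrightarrow\Z/p^k\Z$; in digit terms, the $k$-th truncation $\pi_k$ sends $x=\sum_{i\ge 0}c_ip^i$ to $\sum_{i=0}^{k-1}c_ip^i$ read in $\Z/p^k\Z$. The key point is that $\pi_k$, and every transition map, is a ring homomorphism, so that $\pi_k(x+y)=\pi_k(x)+\pi_k(y)$, $\pi_k(-x)=-\pi_k(x)$ and $\pi_k(xy)=\pi_k(x)\pi_k(y)$. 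Concretely this says that the $k$ least significant digits of a sum, opposite or product depend only on the $k$ least significant digits of the inputs — equivalently, that carries in the schoolbook digit algorithms propagate only towards higher powers of $p$ — so the truncated computation is exact and is literally ordinary arithmetic in the finite ring $\Z/p^k\Z$. Independence of the chosen digit representative is automatic, being built into the quotient.

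For the reciprocal the extra ingredient is the description of the units: $x\in\Z_p^\times$ iff $|x|_p=1$ iff $c_0\neq 0$ iff $\pi_k(x)\in(\Z/p^k\Z)^\times$ for one, hence every, $k\ge 1$; and then $\pi_k(x^{-1})=\pi_k(x)^{-1}$ since a ring homomorphism carries units to units and preserves inverses. Thus computing $x^{-1}$ to $k$ digits is exactly inverting $\pi_k(x)$ modulo $p^k$, which I would do either by the extended Euclidean algorithm on the pair $(\pi_k(x),p^k)$, or — linking up with the Hensel's lemma remark above — by the Newton iteration $y_{i+1}\equiv y_i(2-xy_i)$, which starts from the inverse of $c_0$ modulo $p$ and doubles the number of correct digits at each step, so that precision $p^{-k}$ is reached in $\lceil\log_2 k\rceil$ steps.

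The $n$-adic case goes through verbatim with $\Z_n=\varprojlim_k\Z/n^k\Z$: the truncations $\Z_n\to\Z/n^k\Z$ are still ring homomorphisms, so $+$, $-$ and $\times$ reduce to arithmetic modulo $n^k$; one may alternatively invoke the decomposition $\Z_n\cong\prod_i\Z_{p_i}$ recalled earlier and argue prime by prime. A unit is an $x$ with $\gcd(x,n)=1$, equivalently $\gcd(\pi_k(x),n^k)=1$, and extended Euclid produces its reciprocal mod $n^k$ exactly as before; the presence of zero-divisors in $\Z_n$ is irrelevant since we only invert on the unit group. Since every step has been reduced to integer arithmetic on numbers of at most $k\lceil\log_2 n\rceil$ bits, all four operations run in time polynomial in $k$, which makes the implementation routine. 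The only place that needs a genuine (if classical) algorithmic tool rather than mere homomorphism-chasing is the reciprocal, where one must actually exhibit the inverse; I expect this to be the one real step, everything else being bookkeeping of digits and carries.
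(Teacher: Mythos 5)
Your proof is correct and is the standard argument: the paper itself offers no proof of this proposition, merely pointing to the cited thesis \cite{SML}, and the homomorphism-chasing you do (truncation $\Z_p\to\Z/p^k\Z$ is a ring homomorphism compatible with the inverse limit, units correspond to units, and the reciprocal is the one step requiring an actual algorithm via extended Euclid or the Newton/Hensel iteration $y_{i+1}=y_i(2-xy_i)$) is exactly what such a proof has to contain. The only cosmetic point worth flagging is that the proposition's phrasing ``into the set $\Z_p^\times$'' is loose, since the sum of two units need not be a unit; your implicit reading --- work in the full rings $\Z_p$ and $\Z/p^k\Z$ for addition, opposite and multiplication, and restrict to the unit groups only for the reciprocal --- is the correct one.
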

Moreover:
\begin{prop}
Any advanced operation which requires approximations by Hensel's lemma (provided that the input satisfies lemma's hypothesis) can be as well implemented as an algorithm, as already done by the author.\par\smallskip
Some common examples are:
\begin{itemize}
	\item quadratic surds;
	\item exponentials and logarithms (or other functions determined by a power series) within their convergence radius.
\end{itemize}
\end{prop}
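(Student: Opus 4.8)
The plan is to reduce everything to the fact already established in the previous proposition --- that the ring operations on $\Z_p$ (and more generally $\Z_n$) descend to the finite quotients $\Z/p^k\Z$ --- together with the quantitative form of Hensel's lemma recalled in the remark above, namely that a single Newton step doubles the number of correct digits, so that $p^{-k}$ precision is reached in $\lceil\log_2 k\rceil$ steps. First I would fix the ``advanced operation'' abstractly as the extraction of a simple zero $\alpha\in\Z_p$ of a polynomial $f\in\Z_p[X]$ starting from an approximant $\alpha_0$ with $|f(\alpha_0)|_p<|f'(\alpha_0)|_p^2$ (this being precisely ``lemma's hypothesis''). The Newton iteration $\alpha_{i+1}=\alpha_i-f(\alpha_i)/f'(\alpha_i)$ uses only additions, multiplications and one reciprocal --- the reciprocal legitimate since $v_p(f'(\alpha_i))$ is constant along the iteration, so $f'(\alpha_i)$ stays a unit up to a fixed power of $p$ --- hence by the previous proposition each step is effectively computable modulo $p^N$ at any chosen working precision $N$.

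Second, I would make the precision bookkeeping explicit: to output $\alpha$ correct to $k$ digits it suffices to run $\lceil\log_2 k\rceil$ Newton steps at working precision $N=k+c$, where $c$ depends only on $v_p(f'(\alpha_0))$ and $\deg f$, and where the coefficients of $f$ (resp.\ the input $a$ in the case of square roots) are themselves needed only modulo $p^N$. Since this is a bounded amount of arithmetic in the finite rings $\Z/p^N\Z$, the procedure terminates and is a genuine algorithm. The case of quadratic surds is the instance $f(X)=X^2-a$, which simultaneously recovers the Heron's-algorithm-as-Hensel description of $\sqrt a$ used in the previous section.

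Third, for a function given by a power series $\sum_{m\ge 0}c_m X^m$ with $c_m\in\Q_p$, evaluated at $x$ strictly inside its disc of convergence, I would argue that $|c_m x^m|_p\to 0$, so there is an effectively computable $M=M(k,x)$ with $|c_m x^m|_p<p^{-k}$ for all $m>M$; the partial sum $\sum_{m\le M}c_m x^m$ is then a finite computation in $\Z/p^N\Z$ (after clearing the bounded denominators of the $c_m$, which is exactly what strict interiority of $x$ in the convergence radius permits) and agrees with the true value to $k$ digits. For $\exp$ and $\log$ one makes $M$ explicit through the classical valuations $v_p(m!)$ and $v_p(m)$.

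The main obstacle I anticipate is one of rigor rather than of idea: pinning down the constant $c$ --- equivalently, the precision loss caused by the division by $f'$ and, in the power-series case, by the denominators $m!$ --- uniformly enough that ``this is an algorithm'' is genuinely effective, and giving a clean closed form for $M(k,x)$ valid on the whole open disc of convergence rather than only on a smaller closed subdisc. Everything else is the routine observation that finite truncations of $n$-adic numbers are the finite rings in which the previous proposition already supplies the arithmetic.
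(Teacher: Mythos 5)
Your proposal is sound, but you should be aware that the paper itself offers no proof of this proposition: it is asserted as a report on the author's implementations, resting on the preceding remark (Hensel's lemma as the $p$-adic Newton method, reaching precision $p^{-k}$ in $\log_2 k$ steps) and on the preceding proposition (effective arithmetic in the truncations $\Z/p^k\Z$), with details delegated to the cited thesis \cite{SML}. Your sketch supplies exactly the argument the paper gestures at, and it is correct in outline: the Newton step under the hypothesis $|f(\alpha_0)|_p<|f'(\alpha_0)|_p^2$ uses only ring operations plus one reciprocal of an element of constant valuation, so each step is computable in $\Z/p^N\Z$ by the previous proposition, and $\lceil\log_2 k\rceil$ steps suffice. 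Three points deserve attention if this is to stand as a complete proof. First, the proposition (like the one before it) is also meant for composite $n$; you should say explicitly that $\Z_n\cong\prod_i\Z_{p_i}$ reduces the composite case to finitely many prime cases, with the Chinese remainder theorem reassembling the answer. Second, your claim that working precision $N=k+c$ with $c$ an absolute constant suffices is immediate only when $f'(\alpha_0)$ is a unit (as for $X^2-a$ with $p$ odd and $a$ a unit, which is the Heron case); when $v_p(f'(\alpha_0))=e>0$ (e.g.\ square roots in $\Q_2$) each division by $f'$ costs $e$ digits and the honest bound is $N=k+O(e\log_2 k)$ --- still an algorithm, but this is precisely the bookkeeping you flag as the main obstacle, and it is where the real work lies. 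Third, for a general power series your cutoff $M(k,x)$ is effectively computable only if the valuations $v_p(c_m)$ are effectively bounded below; this holds for $\exp$ and $\log$ via $v_p(m!)$ and $v_p(m)$ as you note, but in the general case it must be added as a hypothesis (coefficients given by an effective procedure together with effective valuation bounds), since ``within the convergence radius'' alone does not make the tail estimate computable.
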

\subsection{Elementary criptography methods}
Let us consider the strings of $k$ elements in the set $0,1,\ldots,n-1$, i.e. elements of $S=\Z/n^k\Z$. Fixing an element $y\in(\Z/n^k\Z)^\times$ (it suffices that the last digit is coprime to $n$), this determines a simple symmetric encrypting key $S\rightarrow S$ by normal $n$-adic multiplication:
$$x\rightarrow xy$$
where the decrypting key is simply $y^{-1}$.
It is likely that our further studies of these kind of algorithms will lead us to better encryption methods.
\begin{example}
Decimal numbers from $0000$ to $9999$ can be encrypted using the key $73$, which has as a reverse key the number $137$ by the well known peculiar factorization $73\times 137=10001$.
\end{example}
\begin{example}
Alphanumeric strings can be considered as $37$-adic numbers with digits $0, \ldots, 9, \text{A}, \ldots, \text{Y}, \text{Z}, \text{\_}$ (the last being a separator) and therefore encrypted using the properties of $\F_{37}=\Z/37\Z$.
\end{example}
\begin{example}
Using bit-mapped bases, we may as well consider the possibility to apply our cryptography methods to such expressions.\par\medskip
We know that using basis $2^{100\times 100}$ (which corresponds to representing digits with a $100\times100$ black and white bitmap), or any other power of $2$, a number is invertible if and only if the last digit is odd; the only possible issue in the division algorithm is effectively calculating the reciprocal of this last digit, but this can be done by considering for this digit the $2$-adic expansion and applying $2$-adic division.\par\medskip
Representing digits as b/w bitmaps has many possible practical applications, as the technology to let an electronic device read such digits is already widespread (barcodes, QR-codes).
\end{example}
\subsection{Randomizing algorithms}
We can try the following randomizing methods given a seed $s\in\N$ (and ignoring trivial endings):
\begin{itemize}
	\item transforming $p$-continued fractions into sums and vice-versa (digits $0,\ldots,p-1$);
	\item considering $\sqrt{1+ps}$ or other $p$-converging power series;
	\item iterating $s$ times the function $x_i\to x_{i+1}=\sqrt{x_i}$ in any $n$-adic ring.
\end{itemize}
Indeed, the operation of going from natural numbers to $p$-adic numbers and vice-versa is totally unpredictable and as such satisfies the 3rd law of randomness (see \cite{RND}).\par\medskip
\begin{remark}
The Montecarlo method for calculating a probability, which is used in physics abundantly, can be used in reverse to verify that an experiment is random enough. Instead of determining the average value of the outcomes for an experiment in order to get an approximated expected value of its distribution, one checks that the known expected value is comparable with the statistically obtained one. We will specifically use the fact that in an $n\times n$ square the probability to have a distance from a corner less than $n$ is approximately $\pi/4$.
\end{remark}
We consider the third possibility, applying our fast algorithm for computing square roots. We show some heuristic data giving credit to the fact that it generates pseudo-random digits.\par\medskip
We start by creating a list of random couples $(x,y)$ with $0<=x<=15624=444444_5$ and $0<=y<=15624=444444_5$. Grouping together $40$ of those couples, we count how many satisfy $x^2+y^2<=15625$, and multiply the ratio by $4$.\par\smallskip
We iterate $100$ times this procedure and obtain a statistic with average $3.115$ and variance $0.1$, confirming the pseudo-randomness of the algorithm by the 3rd law of pseudo-randomness (\emph{cannot be distinguished from a random one by a chosen method}).
\subsection{Last digits of ``unimaginable numbers''}
For some so-called ``unimaginable'' numbers (see \cite{UN}), like Graham's number, we may calculate the last digits using $n$-adic numbers.\par\medskip
\begin{example}
In $\Z_{10}$ there are two non trivial idempotent elements (whose sum is $1$ and product is $0$) given by:
\begin{align*}
16^{5^\infty}&:=\lim_{k\to\infty}16^{5^k}=\ldots 07743740081787109376;\\
5^{2^\infty}&:=\lim_{k\to\infty}5^{2^k}=\ldots 92256259918212890625.
\end{align*}
Their last $k$ digits are in common with the unimaginable numbers $16^{5^k}$ and $5^{2^k}$, given that $k$ is big enough to justify the ``unimaginable'' attribute.
\end{example}
We recall that the Graham number $G$ is defined in Knuth's notation (see \cite{UN} for a wide discussion on hyper-operations) by the following recursive notation:
\begin{align*}
a&\uparrow b=a*a*a\ldots*a\text{ [$b$ times]}=a^b\\
a&\uparrow\uparrow b=a\uparrow a\uparrow a\ldots\uparrow a\text{ [$b$ times]}\\
a&\uparrow^k b=a\uparrow^{k-1}a\uparrow^{k-1}a\uparrow^{k-1}a\ldots\text{ [$b$ times]}\\
g_0&=4\\
g_1&=3\uparrow\uparrow\uparrow\uparrow 3\\
g_k&=3\uparrow^{g_{k-1}}3\\
G&=g_{64}
\end{align*}
This literally unimaginable number was involved in the first proof for Graham's problem in Conway's theory, even though it has been recently substituted by the smaller (but still insanely big) number $2\uparrow\uparrow2\uparrow\uparrow2\uparrow\uparrow9$.
\begin{example}[Last digits of Graham's number]
We now consider the problem of computing the last digits of $G=g_{64}$.\par\medskip
It can be proved that the following algorithm:
$$a_0=3\text{; }x_i=a_i\text{ (mod $10^i$)}\text{; }a_{i+1}=3^{x_i}\text{; }x=\lim_{i\rightarrow+\infty}x_i$$
converges in $\Z_{10}$ and that also $g_\infty:=\lim_{i\rightarrow\infty}g_i$ exists and $x=g_\infty$.\par\smallskip
This number can be seen to be the fixed point for the equation $x=3^x$.\par\medskip
Observing that the $10$-adic difference $G-g_\infty$ ends with an ``unimaginable'' number of zeroes, their last digits are the same for all practical purposes.
\end{example}
\begin{remark}
More generally, an infinite \emph{tetration} $k\uparrow\uparrow\infty:=\lim_{j\to\infty}k\uparrow\uparrow j$ always converges in any $n$-adic ring (see also \cite{TT}), as it must converge for any $p$-adic value: either $p$ divides $k$, thus the sequence converges to $0$, or they are coprime and the convergence is similar to the one of \emph{continued exponentials} (see \cite{CF.NA}).\par\medskip
This number too can be viewed as a fixed point:
$$x=k^x\rightarrow x=k\uparrow\uparrow\infty:=\lim_{i\rightarrow\infty}k\uparrow\uparrow i.$$
This is of course a generalization for $g_\infty=3\uparrow\uparrow\infty$.
\end{remark}

\subsection*{Acknowledgements}
I want to thank my family and friends for supporting me, and also my former advisor R. Dvornicich for his suggestion on participating to JMM. Also a thank goes to my research funder G. d'Atri for his suggestions and support on this work.\par\medskip
This work has been partially supported by POR Calabria FESR-FSE 2014--2020, with the grant for research project ``IoT\&B'', CUP J48C17000230006.


\begin{thebibliography}{99}
\subsection*{Papers}
\bibitem{CF.local.fields.1}Browkin J., \emph{Continued fractions in local fields, I}, Demonstratio Mathematica \textbf{1} (1978), Vol. XI, pp. 67--82
\bibitem{CF.local.fields.2}Browkin J., \emph{Continued fractions in local fields, II}, Mathematics of Computation \textbf{235} (2000), Vol. 70, pp. 1281--1292
\bibitem{CAL}Leonardis A., \emph{Cyclotomic Approximation Lattices}, International Journal of Number Theory, Volume No.11, Issue No. 2., March 2015 (DOI: 10.1142/S1793042115500293)
\bibitem{CF.HA}Leonardis A., \emph{Simple applications of continued fractions and an elementary result about Heron's algorithm}, Notes on Number Theory and Discrete Mathematics, Volume 24, Issue Number 4, December 2018, pp. 59--69.
\bibitem{UN}Leonardis A., d'Atri G., Caldarola F., \emph{Beyond Knuth's notation for ``Unimaginable Numbers'' within computational number theory}, preprint (arXiv:1901.05372).
\bibitem{CF.LFT}O'Dorney E., \emph{Continued fractions and linear fractional transformations}, Integers \textbf{15} (2015), Paper No. A1, 23 pp.
\subsection*{Thesis}
\bibitem{SML}Leonardis A., \emph{Il teorema di Skolem Mahler Lech}, First degree thesis (2006, {http://almatematica.freevar.com/SMLTheorem.pdf})
\bibitem{CF.NA}Leonardis A., \emph{Continued Fractions in Local Fields and Nested Automorphisms}, PhD thesis (2014, {http://almatematica.freevar.com/CF.pdf})
\subsection*{Books}
\bibitem{CASS}Cassels J. W. S., \emph{Lectures on Elliptic Curves}, London Mathematical Society Student Texts \textbf{24}, Cambridge University Press (1991)
\bibitem{MM.GF}Fishman G. S., \emph{Monte Carlo - Concept, Algorithms, and Applications}, Springer (1996)
\bibitem{RND}Goldreich O., \emph{A Primer on Pseudorandom Generators}, University Lecture Series Vol. 55, AMS (2010)
\bibitem{CF.Perron}Perron O., \emph{Die Lehre von den Kettenbrüchen}, B. G. Teubner (1913)
\bibitem{TT}Ripà M., \emph{La strana coda della serie $n\uparrow n\uparrow\ldots\uparrow n$}, e-Book
\bibitem{pAdic}Robert A. M., \emph{A Course in $p$-adic Analysis}, Springer (2000)
\end{thebibliography}
\end{document}